\newcommand{\scp}[2]{\left\langle#1,#2\right\rangle}
\newcommand{\R}{\mathbb{R}}
\newcommand{\inv}{^{-1}}
\DeclareMathOperator{\supp}{supp}
\DeclareMathOperator{\curl}{curl}
\DeclareMathOperator{\Curl}{Curl}
\DeclareMathOperator{\Div}{Div}
\DeclareMathOperator{\so}{\mathfrak{so}}
\DeclareMathOperator{\sym}{sym}
\renewcommand{\skew}{\mathrm{skew}}
\DeclareMathOperator{\mat}{mat}
\DeclareMathOperator{\smat}{axl\inv}
\DeclareMathOperator{\skewvec}{skewvec}
\DeclareMathOperator{\symvec}{symvec}
\DeclareMathOperator{\dvec}{diagvec}
\newcommand{\na}{\nabla}
\DeclareMathOperator{\vect}{vec}
\DeclareMathOperator{\axl}{axl}
\DeclareMathOperator{\diag}{diag}
\newcommand{\sodrei}{{\rm SO}(3)}
\newtheorem{theorem}{Theorem}[section]
\newtheorem{remark}[theorem]{Remark}
\newtheorem{lemma}[theorem]{Lemma}
\newtheorem{conjecture}[theorem]{Conjecture}
\newcommand{\Fpneu}{P}
\newcommand{\ol}{\overline}
\newcommand{\ub}{\underbrace}
\newcommand{\om}{\Omega}
\newcommand{\omq}{\ol{\om}}
\newcommand{\ga}{\Gamma}
\newcommand{\p}{\partial}
\newcommand{\rt}{\R^3}
\newcommand{\rni}{\R^9}
\newcommand{\rN}{\R^N}
\newcommand{\rtt}{\R^{3\times3}}
\newcommand{\rttt}{\R^{(3\times3)\times3}}
\newcommand{\rnini}{\R^{9\times9}}
\newcommand{\rNN}{\R^{N\times N}}
\newcommand{\rNNN}{\R^{(N\times N)\times N}}
\newcommand{\equi}{\Leftrightarrow}
\newcommand{\impl}{\Rightarrow}
\renewcommand{\iff}{\equi}
\newcommand{\nah}{\hat{\na}}
\newcommand{\Leb}{\mathsf{L}}
\newcommand{\Sob}{\mathsf{H}}
\newcommand{\SobW}{\mathsf{W}}
\newcommand{\Con}{\mathsf{C}}
\renewcommand{\L}[2]{\Leb^{#1}_{#2}}
\renewcommand{\H}[2]{\Sob^{#1}_{#2}}
\newcommand{\W}[2]{\SobW^{#1}_{#2}}
\newcommand{\C}[2]{\Con^{#1}_{#2}}
\newcommand{\Lo}{\L{1}{}}
\newcommand{\Lt}{\L{2}{}}
\newcommand{\Lp}{\L{p}{}}
\newcommand{\Lq}{\L{q}{}}
\newcommand{\Lr}{\L{r}{}}
\newcommand{\Ls}{\L{s}{}}
\newcommand{\Li}{\L{\infty}{}}
\newcommand{\Loom}{\Lo(\om)}
\newcommand{\Ltom}{\Lt(\om)}
\newcommand{\Lpom}{\Lp(\om)}
\newcommand{\Lqom}{\Lq(\om)}
\newcommand{\Liom}{\Li(\om)}
\newcommand{\Loomrnnn}{\Lo(\om;\rNNN)}
\newcommand{\Ltomrnnn}{\Lt(\om;\rNNN)}
\newcommand{\Liomrtt}{\Li(\om;\rtt)}
\newcommand{\Lpomrtt}{\Lp(\om;\rtt)}
\newcommand{\Lqomrtt}{\Lq(\om;\rtt)}
\newcommand{\Lsomrtt}{\Ls(\om;\rtt)}
\newcommand{\Woo}{\W{1,1}{}}
\newcommand{\Wop}{\W{1,p}{}}
\newcommand{\Woq}{\W{1,q}{}}
\newcommand{\Woqc}{\W{1,q}{\circ}}
\newcommand{\Wor}{\W{1,r}{}}
\newcommand{\Wos}{\W{1,s}{}}
\newcommand{\Woi}{\W{1,\infty}{}}
\newcommand{\Wtq}{\W{2,q}{}}
\newcommand{\Wooom}{\Woo(\om)}
\newcommand{\Wopom}{\Wop(\om)}
\newcommand{\Woqom}{\Woq(\om)}
\newcommand{\Woqcom}{\Woqc(\om)}
\newcommand{\Worom}{\Wor(\om)}
\newcommand{\Wooomrn}{\Woo(\om;\rN)}
\newcommand{\Wopomrt}{\Wop(\om;\rt)}
\newcommand{\Woromrtt}{\Wor(\om;\rtt)}
\newcommand{\Wosomrtt}{\Wos(\om;\rtt)}
\newcommand{\Woiomrn}{\Woi(\om;\rN)}
\newcommand{\Woiomrt}{\Woi(\om;\rt)}
\newcommand{\Wtqomrt}{\Wtq(\om;\rt)}
\newcommand{\Ho}{\H{1}{}}
\newcommand{\Hoc}{\H{1}{\circ}}
\newcommand{\Ht}{\H{2}{}}
\newcommand{\Hoom}{\Ho(\om)}
\newcommand{\Hoomrt}{\Ho(\om;\rt)}
\newcommand{\Htomrt}{\Ht(\om;\rt)}
\newcommand{\HoomrN}{\Ho(\om;\rN)}
\newcommand{\Hocomga}{\Hoc(\om,\ga)}
\newcommand{\Hocomgart}{\Hoc(\om,\ga;\rt)}
\newcommand{\Ci}{\C{\infty}{}}
\newcommand{\Cic}{\C{\infty}{\circ}}
\newcommand{\Co}{\C{1}{}}
\newcommand{\Cz}{\C{0}{}}
\newcommand{\Cicomrtt}{\Cic(\om;\rtt)}
\newcommand{\Cicomga}{\Cic(\om,\ga)}
\newcommand{\Cicomgart}{\Cic(\om,\ga;\rt)}
\newcommand{\Coomqrtt}{\Co(\omq;\rtt)}
\newcommand{\Ciomq}{\Ci(\omq)}
\newcommand{\Czomq}{\Cz(\omq)}
\newcommand{\Ciomqrt}{\Ci(\omq;\rt)}
\newcommand{\WC}[2]{\SobW^{#1}(\Curl,#2)}
\newcommand{\WoComrtt}{\WC{1}{\om;\rtt}}
\newcommand{\WsCom}{\WC{s}{\om}}
\newcommand{\WsComrtt}{\WC{s}{\om;\rtt}}
\newcommand{\normtwo}[2]{|\!|#1|\!|_{#2}}
\newcommand{\normthree}[2]{|\!|\!|#1|\!|\!|_{#2}}
\newcommand{\norm}[1]{\normtwo{#1}{}}
\newcommand{\normp}[1]{\normthree{#1}{}}
\newcommand{\normLoom}[1]{\normtwo{#1}{\Loom}}
\newcommand{\normLtom}[1]{\normtwo{#1}{\Ltom}}
\newcommand{\normHoom}[1]{\normtwo{#1}{\Hoom}}
\newcommand{\scpLtom}[2]{\scp{#1}{#2}_{\Ltom}}
\begin{document}

\title{Uniqueness of integrable solutions to $\na\zeta=G\zeta$, $\zeta|_{\ga}=0$ 
for integrable tensor-coefficients $G$ and applications to elasticity}
\author[1]{Johannes Lankeit}
\author[1,2]{Patrizio Neff}
\author[1]{Dirk Pauly}
\affil[1]{Universit\"at Duisburg-Essen, Fakult\"at f\"ur Mathematik}
\footnotetext[2]{To whom correspondence should be addressed. e-mail: {\tt patrizio.neff@uni-due.de}}
\date{\today}
\maketitle

\begin{abstract}
Let $\om\subset\rN$ be a Lipschitz domain
and $\ga$ be a relatively open and non-empty 
subset of its boundary $\p\om$.
We show that the solution to the linear first order system 
\begin{align}
\label{eq:dglabstract}
\na\zeta=G\zeta,\quad\zeta|_{\ga}=0
\end{align}
vanishes if $G\in\Loomrnnn$ and $\zeta\in\Wooomrn$, 
which is the case e.g. for square integrable solutions $\zeta$ of \eqref{eq:dglabstract} 
and $G\in\Ltomrnnn$. As a consequence, we prove 
$$\normp{\cdot}:\Cicomgart\to[0,\infty),\quad
u\mapsto\normLtom{\sym(\na u\Fpneu\inv)}$$ 
to be a norm for $\Fpneu\in\Liomrtt$ with $\Curl\Fpneu\in\Lpomrtt$, 
$\Curl\Fpneu\inv\in\Lqomrtt$ for some $p,q>1$ 
with $1/p+1/q=1$ as well as $\det\Fpneu\geq c^+>0$.
We also give a new and different proof for the so called 
`infinitesimal rigid displacement lemma' in curvilinear coordinates: 
Let $\Phi\in\Hoomrt$ satisfy $\sym(\na\Phi^\top\na\Psi)=0$ 
for some $\Psi\in\Woiomrt\cap\Htomrt$ with
$\det\na\Psi\geq c^+>0$. Then there exists a constant translation vector 
$a\in\rt$ and a constant skew-symmetric matrix 
$A\in\so(3)$, such that $\Phi=A\Psi+a$. 

\vspace{1cm}\noindent
{\bf{Key words:}} Korn's inequality, 
generalized Korn's first inequality, 
first order system of partial differential equations, 
uniqueness, 
infinitesimal rigid displacement lemma, 
Korn's inequality in curvilinear coordinates, 
unique continuation
\end{abstract}

\section{Introduction}

Consider the linear first order system of partial differential equations
\begin{align}
\label{gleichung}
\na\zeta=G\,\zeta,\quad\zeta|_{\ga}=0.
\end{align}
Obviously, one solution is $\zeta=0$. But is this solution unique? 
The answer is not as obvious as it may seem; consider for example 
in dimension $N:=1$, $G(t):=1/t$ 
in the domain $\om:=(0,1)$ with $\ga:=\{0\}\subset\p\om$. Then 
$\zeta:=\mathrm{id}\neq0$ solves \eqref{gleichung}.
However, in the latter example the solution becomes unique if $G\in\Loom$,
which is easily deduced from Gronwall's lemma. 
Here we can see that we will need integrability conditions 
on the coefficient $G$; 
for a precise formulation of the result see section \ref{sec:results}. 
The uniqueness of the solution to \eqref{gleichung} makes 
\begin{align}
\label{eq:normdefIntroduction}
\normp{u}:=\normLtom{\sym(\na u\Fpneu\inv)}
\end{align}
a norm on 
\begin{align*}
\Cicomgart
&:=\{u\in\Ciomqrt\,:\,{\rm dist}(\supp u,\ga)>0\},\\
\Ciomqrt
&:=\{u|_{\om}\,:\,u\in\Cic(\rt;\rt)\}
\end{align*}
for $\Fpneu\in\Liomrtt$ with
$\det\Fpneu\geq c^+>0$ if 
$\Curl\Fpneu\in\Lpomrtt$, 
$\Curl\Fpneu\inv\in\Lqomrtt$ for some $p,q>1$ and $1/q+1/p=1$.
Here the $\Curl$ of a matrix field is defined as the row-wise standard $\curl$ in $\rt$.

The question whether an expression of the form \eqref{eq:normdefIntroduction} 
is a norm arises when trying to generalize Korn's first inequality 
to hold for non-constant coefficients, i.e.,
\begin{align}
\label{eq:introkornnonconst}
\exists\,c>0\quad\forall u\in\Hocomgart\qquad
\normLtom{\sym(\na u\Fpneu\inv)}\geq c\normHoom{u},
\end{align}
which was first done for $\Fpneu,\Fpneu\inv,\Curl\Fpneu\in\Coomqrtt$ 
by Neff in \cite{Neff00b}, cf. \cite{Pompe03}.
Here $\Hocomgart$ denotes the closure of
$\Cicomgart$
in $\Hoomrt$. 
The classical Korn's first inequality
is obtained for $\Fpneu$ being the identity matrix, see 
 \cite{Korn09,Ciarlet10,Neff00b,Neff_Pauly_Witsch_Korn_diff_forms_m2as12,Neff_Pauly_Witsch_cracad11,Neff_Pauly_Witsch_arma12}. 
The inequality \eqref{eq:introkornnonconst}  
has been proved in \cite{Pompe03} to hold for continuous $\Fpneu\inv$, 
whereas it can be violated for $\Fpneu\inv\in\Liom$ 
or $\Fpneu\inv\in SO(3)$ a.e.. 
The counterexamples, given by Pompe in \cite{Pompe03} and \cite{Pompe10}, 
see also \cite{Neff_Pompe11},
each use the fact that for such $\Fpneu$ 
an expression of the form of $\normp{\cdot}$ is not a norm 
(It has a nontrivial kernel) on the spaces of functions considered.
Quadratic forms of the type \eqref{eq:introkornnonconst} 
arise in applications to geometrically exact models of shells, 
plates and membranes, in micromorphic and Cosserat type models and in plasticity, 
\cite{Klawonn_Neff_Rheinbach_vanis09,Neff_membrane_existence03,Neff_micromorphic_rse_05,Neff_plate05_poly,Neff01c}.

The so called `infinitesimal rigid displacement lemma in curvilinear coordinates', 
a version of which can be found in \cite{Anicic_Dret_Raoult2004} 
and which is important for linear elasticity in curvilinear coordinates 
(see also \cite{Ciarlet99,CiarletMardare04}) states the following: 
If $\om\subset\rN$ is a bounded domain, 
$\Psi\in\Woiomrn$ satisfying $\det\na\Psi\geq c^+>0$ a.e. 
and $\Phi\in\HoomrN$ with $\sym(\na\Phi^\top\na\Psi)=0$ a.e.,
then on a dense open subset of $\om$ there exist locally constant mappings 
$a:\om\to\rN$ and $A:\om\to\so(N)$ 
such that locally $\Phi=A\Psi+a$.
If $\om$ is Lipschitz then the terms `locally' can be dropped.
In their proof \cite{Anicic_Dret_Raoult2004}, 
the authors apply the chain rule to $\Theta:=\Phi\circ\Psi\inv$ 
and use the observation that the conditions
$\sym(\na\Phi^\top\na\Psi)=0$ and $\sym(\na\Phi(\na\Psi)\inv)=0$ are equivalent
by a clever conjugation with $(\na\Psi)\inv$, this is 
\begin{align}
\label{symconj}
(\na\Psi)^{-\top}\sym(\na\Phi^\top\na\Psi)(\na\Psi)\inv
=\sym(\na\Phi(\na\Psi)\inv)
=\sym(\na(\Phi\circ\Psi\inv))\circ\Psi,
\end{align}
together with the classical infinitesimal rigid displacement lemma applied on 
$\Theta$, defined on the domain $\Psi(\om)$.
If to this lemma a boundary condition $\Phi=0$ on a relatively open subset 
of the boundary is added, 
one obtains $\Phi=0$ (cf. \cite[1.7-3(b)]{Ciarlet99}).

The main part of our proof for $\normp{\cdot}$ being a norm is also concerned 
with obtaining $u=0$ from $\sym(\na u\Fpneu\inv)=0$. 
By taking $\Fpneu=\na\Psi$ to be a gradient, 
we present another proof of the infinitesimal rigid displacement lemma 
in dimension $N=3$ which yields 
$\Phi=A\Psi+a$ with $A\in\so(N)$, $a\in\rN$.
We need slightly more regularity 
but do not use the chain rule for $\Theta$.

The key tool for obtaining our results
is Neff's formula for the $\Curl$ of the product of two matrices, 
the first of which is skew-symmetric. 
We state a generalization of this formula in section \ref{sec:diecurlformel}.

This paper is organized as follows:
The next section states the main results that will be proven in the subsequent chapters.
Section \ref{sec:zeroonlines} provides a tool that gives $\zeta=0$ 
on lines and is used in section \ref{sec:zerooncube} 
where this is extended to cubes. 
Section \ref{sec:fromcubetodomain} then takes care of the whole domain 
if a `$(\zeta=0)$-cube' is given as starting point. 
In section \ref{sec:proofs} the uniqueness theorem of section \ref{sec:results} is proven, 
mainly by putting together the results of the previous sections. 
After that and before applying the theorem we have a closer look 
at the formula for the $\Curl$ of a product of matrices (section \ref{sec:diecurlformel}). 
Finally, in sections \ref{sec:thenorm} and \ref{sec:infrigdispl}, respectively, 
we prove that $\normp{\cdot}$ is a norm 
and we present our new proof of the infinitesimal rigid displacement lemma.

\section{Results}
\label{sec:results}

Let us first note that by $\na$ we denote not only the gradient of a scalar-valued function, 
but also (as an usual gradient row-wise) 
the derivative or Jacobian of a vector-field.
The $\Curl$ of a matrix is to be taken row-wise as usual $\curl$ for vector fields.

\begin{theorem}[\sf Unique Continuation]
\label{result:1}
Let $\om\subset\rN$, $N\in\mathbb{N}$, be a Lipschitz domain,
$\ga$ be a relatively open and non-empty subset of $\p\om$ as well as
$G\in\Loomrnnn$. If $\zeta\in\Wooomrn$ solves
$$\na\zeta=G\,\zeta,\quad\zeta|_\ga=0,$$
then $\zeta=0$.
\end{theorem}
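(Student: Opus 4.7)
The plan is to reduce \eqref{gleichung} to the classical one-dimensional case, in which the Cauchy problem $\zeta'=g\zeta$ with $\zeta(t_0)=0$ and $g\in\Lo$ admits only the trivial solution by Gronwall's inequality. The reduction is enabled by Fubini's theorem: for almost every line $\ell$ parallel to a fixed coordinate axis, $G|_\ell\in\Lo$ and $\zeta|_\ell$ is absolutely continuous with derivative $(G\zeta)|_\ell$, yielding a scalar ODE with $\Lo$-coefficient. I would follow the three-stage organisation indicated in the introduction: first establish $\zeta=0$ on lines, then on cubes, and finally on all of $\om$.

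The line step is a direct application of Gronwall's inequality: on any axis-parallel segment $\ell\subset\om$ along which $\zeta|_\ell$ is absolutely continuous, $G|_\ell\in\Lo$, and the scalar ODE holds, vanishing of $\zeta$ at a single point of $\ell$ forces $\zeta\equiv 0$ on $\ell$. The cube step then combines the line step with Fubini: if $Q\subset\om$ is an axis-aligned cube on one face $F$ of which $\zeta$ vanishes in the trace sense, almost every line through $Q$ orthogonal to $F$ is a \emph{good} line starting at a \emph{good} point of $F$, and the line step gives $\zeta=0$ a.e.\ on $Q$.

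To initiate the argument I would produce a first $(\zeta=0)$-cube near a point of $\ga$. Since $\om$ is Lipschitz, a bilipschitz chart flattens $\p\om$ there; in the flattened coordinates \eqref{gleichung} keeps the same form with a transformed coefficient still in $\Lo$ (the chart's derivatives and those of its inverse being bounded), a small axis-aligned cube with one face on the flattened piece of $\ga$ becomes available, and the trace condition $\zeta|_\ga=0$ delivers $\zeta=0$ on that face. The cube step then yields $\zeta=0$ on the whole cube, and transporting back by the chart one obtains a first solid cube inside $\om$ on which $\zeta$ vanishes. To propagate $\zeta=0$ through $\om$ I would chain overlapping axis-aligned cubes along a polygonal path from the starting cube to an arbitrary point of $\om$, at each step invoking the cube step on a new cube whose face is covered by the previous one; connectedness of $\om$ finishes the proof.

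The main obstacle I expect lies in the careful coupling of Fubini with the various a.e.\ hypotheses: on a single full-measure set of lines one must simultaneously secure (i) absolute continuity of $\zeta|_\ell$, (ii) $\Lo$-integrability of $G|_\ell$, (iii) validity of the scalar ODE $(\zeta|_\ell)'=(G\zeta)|_\ell$, and (iv) the zero initial value inherited from the trace on the selected face. Arranging (iv) for $\Woo$-traces on faces tucked against a Lipschitz piece of $\p\om$ after the bilipschitz flattening, and passing it cleanly to the initial condition of the scalar ODE, is where most of the technical care has to go; the Gronwall step itself, once the correct line has been chosen, is essentially classical.
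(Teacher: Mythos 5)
Your proposal follows essentially the same route as the paper: Gronwall's inequality on axis-parallel segments, the ACL/Fubini reduction to obtain vanishing on a cuboid from vanishing on one of its faces, a bi-Lipschitz flattening of the boundary near a point of $\ga$ to produce the first cuboid (with the transformed coefficient checked to remain in $\Lo$), and a chaining of cuboids along paths to propagate $\zeta=0$ through the connected domain. The technical point you flag — selecting a single full-measure set of lines on which absolute continuity, integrability of the restricted coefficient, the scalar ODE, and the zero initial value all hold simultaneously — is exactly what the paper resolves via the characterization of $\Woo$ by absolute continuity on almost all lines.
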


From the differential equation itself
it is not a priori clear that $\zeta$ belongs to $\Wooom$. 
But this can be ensured by requiring higher integrability of $G$ and $\zeta$, 
since for bounded domains, e.g.,
the conditions $G\in\Ltom$ and $\zeta\in\Ltom$ 
imply $\na\zeta\in\Loom$ and hence $\zeta\in\Wooom$, 
where an application of the theorem ensures $\zeta=0$. 
Thus we have obtained the uniqueness of $\Ltom$-solutions 
if the coefficient $G$ are square-integrable.
Of course, the same holds if $\zeta\in\Lpom$ for arbitrary $p\geq1$. 
Then $G$ at least needs to be an $\Lqom$-function, 
where $1/p+1/q=1$.

\begin{theorem}[\sf Norm]
\label{result:norm}
Let $\om\subset\rt$ be a Lipschitz domain, 
$\emptyset\neq\ga\subset\p\om$ be relatively open, 
$\Fpneu\in\Liomrtt$ with 
$\det\Fpneu\geq c^+>0$, 
$\Curl\Fpneu\in\Lpomrtt$,
$\Curl\Fpneu\inv\in\Lqomrtt$
for some $p,q>1$ with $1/p+1/q=1$. Then 
\begin{align}
\label{eq:normdef}
\normp{\cdot}:\Cicomgart\to[0,\infty),\quad
u\mapsto\normLtom{\sym(\na u\Fpneu\inv)}
\end{align}
defines a norm.
\end{theorem}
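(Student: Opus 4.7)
Positive homogeneity and the triangle inequality are immediate from the linearity of $u \mapsto \sym(\na u\,\Fpneu\inv)$ together with the seminorm properties of $\normLtom{\cdot}$; the nontrivial content is definiteness. My plan is to rephrase the definiteness statement as the unique-continuation question of Theorem~\ref{result:1} for a combined unknown $\zeta := (u,a) \in \R^{6}$ on $\om \subset \rt$.

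Assume $u \in \Cicomgart$ satisfies $\normp{u} = 0$. Then $\sym(\na u\,\Fpneu\inv) = 0$ pointwise, so $A := \na u\,\Fpneu\inv$ takes values in $\so(3)$; setting $a := \axl A \in \R^{3}$ we have $A = \smat(a)$ and the fundamental identity
\[
\na u \;=\; \smat(a)\,\Fpneu .
\]
This already provides the $\na u$-block of the intended first-order system, with coefficient $\Fpneu \in \Liomrtt$ acting linearly on $a$. Because $u$ vanishes in a neighbourhood of $\ga$, so do $\na u$, $A$ and $a$, supplying the homogeneous boundary condition $\zeta|_{\ga} = 0$.

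To obtain a corresponding equation for $\na a$ I would apply $\Curl$ to both sides of $\na u = \smat(a)\,\Fpneu$. Since $\Curl\na u \equiv 0$, this gives $\Curl\bigl(\smat(a)\,\Fpneu\bigr) = 0$; the generalization of Neff's Curl-product formula from Section~\ref{sec:diecurlformel} then decomposes the left-hand side into a principal part that is linear in $\na a$ with $\Liomrtt$-coefficient, algebraically invertible because $\det\Fpneu \geq c^+ > 0$ also ensures $\Fpneu\inv \in \Liomrtt$, plus a zeroth-order remainder built from $\smat(a)\,\Curl\Fpneu$- and $\smat(a)\,\Curl\Fpneu\inv$-type terms. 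Solving for $\na a$ yields an identity of the schematic form
\[
\na a \;=\; K_1\,\smat(a)\,\Curl\Fpneu \;+\; K_2\,\smat(a)\,\Curl\Fpneu\inv ,\qquad K_1,K_2 \in \Liomrtt .
\]

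Stacking the two blocks defines the coefficient $G$ of the system $\na\zeta = G\zeta$. Each entry of $G$ is either bounded, or an $\Liomrtt$-multiple of $\Curl\Fpneu \in \Lpomrtt$, or an $\Liomrtt$-multiple of $\Curl\Fpneu\inv \in \Lqomrtt$, so the hypotheses --- together with the boundedness of $\om$ and the compatibility $1/p+1/q = 1$ --- are precisely those ensuring $G$ lies in the $\Lo$-coefficient class required by Theorem~\ref{result:1}. Since $u$ is smooth we trivially have $\zeta \in \Woo(\om;\R^{6})$, and (extending Theorem~\ref{result:1} in the obvious way to $\R^{6}$-valued unknowns on $\om \subset \rt$) the unique-continuation statement forces $\zeta \equiv 0$; in particular $u \equiv 0$, which is definiteness. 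The principal obstacle is the algebraic passage from $\Curl(\smat(a)\,\Fpneu) = 0$ to the explicit formula for $\na a$ with the correct $\Lo$-structure; this is exactly the content of the generalized Curl-product formula of Section~\ref{sec:diecurlformel}, and once it is in hand, the verification that $G \in \Loom$ is mere bookkeeping using the integrability hypotheses on $\Curl\Fpneu$ and $\Curl\Fpneu\inv$.
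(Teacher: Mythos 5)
Your overall strategy --- reduce definiteness to the unique continuation Theorem \ref{result:1} by noting $A:=\na u\,\Fpneu\inv=\smat(a)$ is skew-symmetric and exploiting $\Curl(A\Fpneu)=\Curl\na u=0$ via the generalized Curl-product formula --- is exactly the paper's, and the algebra you defer to Section \ref{sec:diecurlformel} is indeed available there (Lemma \ref{neffformula}, formula \eqref{eq:curlformelschwach}, yields $\nah a=-L_{\Fpneu}\inv\vect(\smat(a)\Curl\Fpneu)$; note that no $\Curl\Fpneu\inv$-term appears in this equation, contrary to your schematic $K_2$-term). The genuine gap is your claim that ``since $u$ is smooth we trivially have $\zeta\in\Woo$''. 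The component $a=\axl(\na u\,\Fpneu\inv)$ inherits the regularity of $\Fpneu\inv$, which is only $\Li$; a priori $a$ is a bounded measurable function with no weak derivatives at all. Without $a\in\Woo$ you can neither apply Theorem \ref{result:1} (which requires $\zeta\in\Woo$) nor legitimately invoke the Curl-product formula with the skew factor on the left (Lemma \ref{neffformula} requires $X\in\Wor$ for some $r$ conjugate to the exponent of $Y$). This is precisely where the hypothesis $\Curl\Fpneu\inv\in\Lqomrtt$ enters, and your sketch misplaces it: the paper first applies Lemma \ref{neffformula} with $X:=\na u$ (smooth) and $Y:=\Fpneu\inv$ to conclude $\Curl A\in\Lq$ on $\tilde\om$, and then uses that for skew-symmetric fields the $\Curl$ controls the full gradient (Remark \ref{skewhorem}), whence $A\in\Woq\subset\Woo$. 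Only after this regularity step can the formula be applied a second time with $X:=A$, $Y:=\Fpneu$ and $\zeta=\axl A$ be fed into Theorem \ref{result:1}. So the ``principal obstacle'' is not the algebraic passage to the $\na a$-equation but the a priori differentiability of $A$; your proof needs this step inserted, and with it your appeals to $\Curl\Fpneu\inv\in\Lq$ and to the conjugacy $1/p+1/q=1$ land in the right place.

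A secondary, harmless point: the paper avoids the $\R^{6}$-valued system altogether by applying Theorem \ref{result:1} only to $\zeta:=\axl A\in\rt$ and then recovering $u=0$ from $\na u=A\Fpneu=0$ together with $u|_{\ga}=0$; your stacked system would also work, but it needs the (trivial) extension of Theorem \ref{result:1} to unknowns whose target dimension differs from $N$, and it is not needed.
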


\begin{remark}
In the case of $p=q=2$ and for 
$\Fpneu\in\sodrei$ a.e., 
$\Curl\Fpneu\inv\in\Ltom$ is no additional condition, 
since then $\Curl\Fpneu\in\Ltom\iff\Curl\Fpneu\inv\in\Ltom$. 
(Note that for $\Fpneu\in\sodrei$ a.e. 
generally $\Fpneu,\Curl\Fpneu\in\Lpom$ 
is equivalent to $\Fpneu\in\Wopom$, cf. \cite{Neff_curl06}.)
\end{remark}

\begin{conjecture}
Theorem \ref{result:norm} holds for 
$\Fpneu\in\Liom$ with $\Curl\Fpneu\in\Lpom$
and $\det\Fpneu\geq c^+>0$ for some $p>1$ or even $p\geq1$.
\end{conjecture}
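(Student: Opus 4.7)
The plan is to reduce the conjecture to the unique continuation principle of Theorem \ref{result:1} by pairing $u$ with the (forced) skew-symmetric matrix $A := \na u\,\Fpneu\inv$ into a single unknown $\zeta$, and then reading off a first-order system $\na\zeta = G\zeta$ for it. The generalization of Neff's $\Curl$-product formula announced in section \ref{sec:diecurlformel} is what makes this plausible: applied to $\Curl(A\Fpneu)$ with $A$ skew it should express $\na A$ using only $A$, $\Fpneu$, $\Fpneu\inv$ and $\Curl\Fpneu$, so that no control of $\Curl\Fpneu\inv$ is needed to close the system.

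\textbf{Steps.} Positivity, homogeneity and the triangle inequality are immediate, so only definiteness must be established. Fix $u \in \Cicomgart$ with $\sym(\na u\,\Fpneu\inv) = 0$ and set $A := \na u\,\Fpneu\inv$, which lies in $\Liomrtt$ and is skew-symmetric, with $\na u = A\,\Fpneu$. From $\Curl(\na u) = 0$ together with the generalized Neff identity
\[
\Curl(A\Fpneu) \;=\; A\,\Curl\Fpneu \;+\; \mathcal{L}(\na A;\Fpneu),
\]
in which $\mathcal{L}(\cdot;\Fpneu)$ is linear in its first argument and invertible whenever $\det\Fpneu\neq 0$, one can read off
\[
\na A \;=\; -\mathcal{L}(\cdot;\Fpneu)\inv\!\bigl(A\,\Curl\Fpneu\bigr),
\]
the product of an $\Li$-factor with an $\Lp$-factor, hence in $\Lpom$. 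Combined with $\na u = A\Fpneu$ the two relations give
\[
\na\!\begin{pmatrix}u\\ \axl A\end{pmatrix} \;=\; G\begin{pmatrix}u\\ \axl A\end{pmatrix}, \qquad G \in \Lpom \subset \Loom .
\]
Since $u \in \Cicomgart$, both $u$ and $\na u$---and therefore $A$ and $\axl A$---vanish in a neighbourhood of $\ga$, so $(u,\axl A) \in \Woo(\om;\R^6)$ has zero trace on $\ga$. Theorem \ref{result:1} then forces $(u,\axl A) = 0$; in particular $u = 0$, which is the missing definiteness.

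\textbf{Main obstacle.} The genuine difficulty is to make the product identity $\Curl(A\Fpneu) = A\,\Curl\Fpneu + \mathcal{L}(\na A;\Fpneu)$ rigorous when $A$ and $\Fpneu$ are only $\Li$: $\na A$ is not a priori a function, so it has to be produced from the identity itself by inverting $\mathcal{L}(\cdot;\Fpneu)$ inside a distributional space. In the setting of Theorem \ref{result:norm} the companion assumption $\Curl\Fpneu\inv \in \Lq$ supplies precisely the dual integrability required to push an approximation $\Fpneu_n \to \Fpneu$ through both sides of the product rule; dropping it leaves no a priori control of the inverse side, and I expect this to be the real obstruction---most acutely in the borderline case $p = 1$, where no H\"older slack remains.
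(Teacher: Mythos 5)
The statement you are addressing is stated in the paper as a \emph{conjecture}: the authors do not prove it, and your proposal does not close it either. Your strategy is exactly the skeleton of the paper's proof of Theorem \ref{result:norm} (set $A:=\na u\Fpneu\inv$, use the generalized Neff formula on $\Curl(A\Fpneu)=\Curl\na u=0$ to solve for $\nah\axl A$, feed the resulting first-order system into Theorem \ref{result:1}), and every step of it except one is routine. The one step that is not routine is the one you defer to your ``Main obstacle'' paragraph, and it is precisely the content of the conjecture. In the paper, Lemma \ref{neffformula} is proved for $X\in\Woromrtt$ and $Y\in\WsComrtt$ with $1/r+1/s=1$ by density of smooth fields; to apply it with $X=A$ one must first know that $A$ has weak derivatives in some $\Lr$. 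The hypothesis $\Curl\Fpneu\inv\in\Lqomrtt$ that the conjecture drops is used exactly once in the paper, and exactly for this purpose: applying the lemma the other way around, to the smooth $X=\na u$ and $Y=\Fpneu\inv$, yields $\Curl A\in\Lq$, and Remark \ref{skewhorem} (for skew-symmetric fields the $\Curl$ controls the full gradient) then gives $A\in\Woq$. Without that hypothesis there is no route in the paper's toolbox to $A$ being weakly differentiable at all.

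Your suggestion to ``produce $\na A$ from the identity itself by inverting $\mathcal{L}(\cdot;\Fpneu)$ inside a distributional space'' runs into a concrete obstruction that you should name explicitly: in the identity $\Curl(A\Fpneu)=\mat L_{\Fpneu}(\nah\axl A)+A\Curl\Fpneu$, the term $A\Curl\Fpneu$ is a genuine $\Lp$ function and the left-hand side vanishes, but $\nah\axl A$ is a priori only a first-order distribution and $L_{\Fpneu}$ is only an $\Li$ coefficient; the product of a merely bounded, non-differentiable matrix field with a distribution of order one is not defined, so the identity cannot even be written down distributionally, let alone inverted. (The duality trick used in the proof of Theorem \ref{result:infrigdisplemma} needs test fields of the form $\mat L_{\na\Psi}^{-\top}\vect\Theta\in\Woqcom$, i.e.\ one weak derivative of $L$, which is likewise unavailable when $\Fpneu$ is only $\Li$ with $\Curl\Fpneu\in\Lp$.) So your write-up is an accurate diagnosis of why the conjecture is open rather than a proof of it; as submitted, the displayed formula for $\na A$ is unjustified and the argument has a genuine gap at its central step.
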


\begin{remark}
Since the norms $\normp{\cdot}$ and $\normHoom{\cdot}$ 
are not shown to be equivalent, it is not clear, 
whether the spaces $\Hocomga=\ol{\Cicomga}^{\normHoom{\cdot}}$ 
and $\ol{\Cicomga}^{\normp{\cdot}}$ coincide. 
However, by \cite{Pompe03}, these norms are equivalent 
if $\Fpneu\in\Czomq$ with $\det\Fpneu\geq c^+>0$. 
\end{remark}

\begin{conjecture}
The norms are equivalent if
$\Fpneu\in\Liom$ with $\Curl\Fpneu\in\Lpom$
and $\det\Fpneu\geq c^+>0$ for some $p>1$ or even $p\geq1$.
\end{conjecture}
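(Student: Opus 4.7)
The upper bound $\normp{u}\le C\normHoom{u}$ is immediate: $\det\Fpneu\ge c^+>0$ together with $\Fpneu\in\Liom$ yields $\Fpneu\inv\in\Liom$ via the cofactor formula, so $\normLtom{\sym(\na u\Fpneu\inv)}\le\normtwo{\Fpneu\inv}{\Liom}\normLtom{\na u}\le C\normHoom{u}$. The substance of the conjecture is the reverse inequality $\normHoom{u}\le C\normp{u}$, which I would prove in two steps, in the spirit of the standard derivation of Korn's first inequality from Korn's second.

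Step~1 (quantitative): establish a variable-coefficient second Korn inequality
\begin{equation*}
\normLtom{\na u}\le C\bigl(\normLtom{\sym(\na u\Fpneu\inv)}+\normLtom{u}\bigr)
\end{equation*}
on all of $\Hoomrt$ (no boundary trace). Writing $V:=\na u\Fpneu\inv$, one has $\na u=V\Fpneu$ and $\Curl(V\Fpneu)=\Curl(\na u)=0$; Neff's generalized Curl-product formula from Section~\ref{sec:diecurlformel} then expresses $\Curl V$ as a bilinear form in $V$ and $\Curl\Fpneu$. A matrix-valued Korn–Ne\v cas estimate would then bound $\normLtom{V}$ by $\normLtom{\sym V}$, the $\Sob^{-1}$-norm of $\Curl V$, and a compact remainder absorbable via Rellich into $\normLtom{u}$; multiplying by $\Fpneu\in\Liom$ closes the bound on $\normLtom{\na u}$.

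Step~2 (qualitative): upgrade Step~1 to the full inequality on $\Hocomgart$ by a contradiction using compactness. Were the full inequality to fail, there would be $(u_n)\subset\Hocomgart$ with $\normHoom{u_n}=1$ and $\normp{u_n}\to 0$; Rellich and reflexivity produce, along a subsequence, $u_n\rightharpoonup u$ in $\Hoomrt$ and $u_n\to u$ in $\Ltomrt$. Weak continuity of $v\mapsto\sym(\na v\Fpneu\inv)$ yields $\sym(\na u\Fpneu\inv)=0$ with $u\in\Hocomgart$. Invoking Conjecture~2.4 — which one proves by applying Theorem~\ref{result:1} to $\zeta:=\axl(\skew(\na u\Fpneu\inv))$, since the Curl-product formula turns the constraint $\Curl(\na u)=0$ into a first order system $\na\zeta=G\zeta$ with $G\in\Lo$ built solely from $\Fpneu\inv$ and $\Curl\Fpneu$ — gives $u=0$, so $u_n\to 0$ in $\Ltomrt$. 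Step~1 then forces $\normLtom{\na u_n}\to 0$, contradicting $\normHoom{u_n}=1$.

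The principal obstacle is Step~1: pushing the Korn–Ne\v cas estimate through when $\Curl\Fpneu$ has only $\Lpom$ integrability. In three dimensions, crude H\"older places the product $V\cdot\Curl\Fpneu$ into $\Sob^{-1}$ only for $p\ge 3$, so the range $1<p<3$ of the conjecture demands that one exploit the algebraic cancellations in Neff's Curl-product formula (a compensated-compactness improvement analogous to the div-curl estimate of Coifman–Lions–Meyer–Semmes). The borderline case $p=1$ is more delicate still, since $\Lo\cdot\Lt\not\hookrightarrow\Sob^{-1}$ in general and one must appeal directly to Hardy-space bilinear estimates adapted to the specific structure of the formula. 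A secondary hurdle is Conjecture~2.4 itself, whose proof hinges on showing that the Curl-product formula yields a coefficient $G$ independent of $\Curl\Fpneu\inv$, thereby bypassing the extra $\Lq$ hypothesis in Theorem~\ref{result:norm}.
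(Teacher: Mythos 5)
This statement is posed in the paper as a \emph{conjecture}: the authors give no proof of it, so there is no argument of theirs to compare yours against. The only question is whether your proposal actually closes the gap, and it does not --- it is a research programme whose two load-bearing steps are exactly the open points. In Step~1 you concede that the H\"older/Ne\v cas route places $V\cdot\Curl\Fpneu$ in $\Sob^{-1}$ only for $p\ge 3$ and that the range $1<p\le 3$ (let alone $p=1$) would require an unproven compensated-compactness or Hardy-space bilinear estimate adapted to Neff's formula. That estimate is the substance of the conjecture; gesturing at Coifman--Lions--Meyer--Semmes does not supply it, and nothing in the paper does either.

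Step~2 has a more concrete defect. To conclude that the kernel is trivial you invoke the paper's other conjecture and sketch its proof as ``apply Theorem~\ref{result:1} to $\zeta:=\axl(\skew(\na u\Fpneu\inv))$''. But Theorem~\ref{result:1} requires $\zeta\in\Wooomrn$, i.e.\ the skew field $A=\na u\Fpneu\inv$ must first be shown to be weakly differentiable. In the proof of Theorem~\ref{result:norm} this is obtained precisely by applying Lemma~\ref{neffformula} with $X=\na u$, $Y=\Fpneu\inv$ together with Remark~\ref{skewhorem} (the $\Curl$ of a skew-symmetric field controls all its derivatives), and this is the one and only place where the hypothesis $\Curl\Fpneu\inv\in\Lqomrtt$ enters. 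Your observation that the resulting coefficient $G$ in $\na\zeta=G\zeta$ involves only $\Fpneu\inv$ and $\Curl\Fpneu$ is correct but beside the point: it addresses the integrability of the coefficient, not the a priori regularity of the unknown, and without the latter the unique continuation theorem cannot be applied at all. An additional strain in your compactness argument is that the weak limit $u$ lies only in $\Hocomgart$, so $\na u\in\Ltomrtt$ rather than $\Liomrtt$, which further restricts the exponents available when invoking Lemma~\ref{neffformula}. So the proposal is a reasonable outline of how one might attack the problem, but both of its essential steps remain unestablished.
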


\begin{theorem}[\sf Infinitesimal Rigid Displacement Lemma]
\label{result:infrigdisplemma}
Let $\om\subset\rt$ be a Lipschitz domain. 
Moreover, let $\Phi\in\Wopomrt$
and $\Psi\in\Woiomrt\cap\Wtqomrt$ with 
$\det\na\Psi\geq c^+>0$ a.e.
and $p,q>1$, $1/p+1/q=1$. If
$$\sym(\na\Phi^\top\na\Psi)=0$$ 
then there exist $a\in\rt$ 
and a constant skew-symmetric matrix $A\in\so(3)$, 
such that $\Phi=A\Psi+a$.
\end{theorem}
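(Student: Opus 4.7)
The plan is to set $A := \na\Phi(\na\Psi)^{-1}$ and show that $A$ takes values in $\so(3)$ and is in fact constant on $\om$; once this is done, $\na(\Phi - A\Psi) = \na\Phi - A\na\Psi = 0$ on the connected Lipschitz domain $\om$ yields $\Phi - A\Psi = a$ for some constant $a \in \rt$, which is the desired conclusion. Skew-symmetry of $A$ is immediate from the conjugation identity \eqref{symconj}: since $(\na\Psi)^{-1}$ exists and is uniformly bounded thanks to $\Psi \in \Woiomrt$ and $\det\na\Psi \geq c^+ > 0$, the hypothesis $\sym(\na\Phi^\top\na\Psi) = 0$ translates to $\sym A = \sym(\na\Phi(\na\Psi)^{-1}) = 0$, and $A \in \Lp$ takes values in $\so(3)$ a.e.; I let $\omega := \axl A$ denote its axial vector.

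To prove that $A$ is constant, I would exploit that $\na\Phi$ is itself a gradient, hence $\Curl(\na\Phi) = 0$ distributionally, and therefore $\Curl(A\na\Psi) = 0$. Applying Neff's formula for the $\Curl$ of a product whose first factor is skew-symmetric (section \ref{sec:diecurlformel}) expands this into a term encoding $\na\omega$ and a term proportional to $A\,\Curl(\na\Psi)$; the latter vanishes a.e.\ because $\Curl(\na\Psi) = 0$ (second partials of $\Psi$ commute a.e.\ since $\Psi \in \Wtqomrt$). The integrability assumption $1/p + 1/q = 1$ is exactly what is needed to make the distributional expansion rigorous: the pairing $A \cdot \na^2\Psi$ lies in $\Lo$ by H\"older. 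What remains is an equation of the form $\mathcal{T}(\na\omega, \na\Psi) = 0$, where $B \mapsto \mathcal{T}(B,F)$ is a linear endomorphism of $\rtt$ depending polynomially on $F$.

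The key algebraic step is that $\mathcal{T}(\cdot, F)$ is bijective whenever $\det F \neq 0$. Read column-wise, the equation becomes $\p_j\omega \times \p_k\Psi = \p_k\omega \times \p_j\Psi$ for all $j, k \in \{1,2,3\}$; since $\det\na\Psi \geq c^+ > 0$ the three columns $\p_k\Psi$ form a basis of $\rt$, so expanding each $\p_j\omega$ in this basis and comparing coefficients in the basis $\{\p_1\Psi \times \p_2\Psi,\ \p_2\Psi \times \p_3\Psi,\ \p_3\Psi \times \p_1\Psi\}$ of $\rt$ forces all nine expansion coefficients to vanish. Thus $\na\omega = 0$ a.e.; by connectedness, $\omega$, and therefore $A$, is a constant skew matrix, and the theorem follows.

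The principal obstacle is the low regularity $A \in \Lp$, so that $\na A$ is a priori only a distribution. The derivation of $\mathcal{T}(\na\omega, \na\Psi) = 0$ and the pointwise inversion of $\mathcal{T}(\cdot,\na\Psi)$ must therefore be interpreted weakly; I would handle this by a mollification argument that uses the H\"older conjugacy $1/p + 1/q = 1$ to pass the crucial product $A\,\na^2\Psi$ to the limit in $\Lo$, together with the observation that $\mathcal{T}^{-1}(F)$ is a bounded rational function of the entries of $F$ and $F^{-1}$, both of which are in $\Li$.
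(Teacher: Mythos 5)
Your outline reproduces the paper's proof in all essentials: the same definition $A:=\na\Phi(\na\Psi)\inv$, skew-symmetry via the conjugation identity \eqref{symconj}, Neff's formula \eqref{eq:curlformelschwach} applied to $A\na\Psi=\na\Phi$ with the $A\Curl(\na\Psi)$ term dropping out, invertibility of the resulting linear operator in $\na\axl A$, and a mollification of $A$ to cope with $A\in\Lpomrtt$ only. Your column-wise derivation of the injectivity (expanding $\p_j\omega$ in the basis $\{\p_k\Psi\}$ and comparing coefficients against the basis of cross products) is a nice elementary substitute for the paper's citation of $\det L_Y=-2(\det Y)^3$, and it checks out. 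The one place where your sketch is thinner than it can afford to be is the weak inversion step: you justify it by saying that $\mathcal{T}\inv(F)=L_{\na\Psi}\inv$ is a \emph{bounded} rational function of $\na\Psi$ and $(\na\Psi)\inv$, but $\Li$-boundedness alone does not let you invert $L_{\na\Psi}$ against a distributional gradient --- you cannot multiply the distribution $\nah\axl A$ by an $\Li$ matrix field. What is actually needed, and what the paper uses, is that $L_{\na\Psi}$ and $L_{\na\Psi}\inv$ lie in $\Woqom$ (this is exactly where the hypothesis $\Psi\in\Wtqomrt$, and not merely $\Psi\in\Woiomrt$, is consumed): one then tests with $\Curl(\mat L_{\na\Psi}^{-\top}\vect\Theta)$ for $\Theta\in\Cicomrtt$, which is an admissible $\Lq$ field because $L_{\na\Psi}^{-\top}\vect\Theta\in\Woqcom$, integrates by parts twice to reach $\scpLtom{\axl A_n}{\Div\Theta}$, and passes to the limit using $A_n\to A$ in $\Lp$ and H\"older's inequality with $1/p+1/q=1$. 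You correctly identified the pairing of $A$ against $\na^2\Psi$ as the crux, so this is a matter of making the duality argument explicit rather than a wrong turn; with that step written out your proof coincides with the paper's.
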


\begin{remark}
When comparing two nearby configurations of an elastic body, 
namely $\hat{\Psi}:\om\to\rt$ and $\Psi:\om\to\rt$, 
following Ciarlet \cite{CiarletMardare04} we may always write 
$\hat{\Psi}=\Psi+\Phi$, where $\Phi:\om\to\rt$ 
is the displacement from $\Psi$ to $\hat{\Psi}$.
The respective metric tensors of the two configurations are 
$\na\hat\Psi^\top\na\hat{\Psi}$ and $\na\Psi^\top\na\Psi$.
In terms of the displacement $\Phi$ 
to lowest order we have for the $\Phi$-linearized change of the metric
\begin{align*}
[\na\hat\Psi^\top\na\hat{\Psi}-\na\Psi^\top\na\Psi]_{\mathrm{lin},\Phi}
&=[(\na\Psi+\na\Phi)^\top(\na\Psi+\na\Phi)-\na\Psi^\top\na\Psi]_{\mathrm{lin},\Phi}\\
&=\na\Phi^\top\na\Psi+\na\Psi^\top\na\Phi=2\sym(\na\Phi^\top\na\Psi).
\end{align*}
Therefore, the infinitesimal rigid displacement lemma expresses the fact that 
if the linearized change of the metric is zero, 
then the displacement must be (the linearized part of) some rigid displacement.
\end{remark}

\section{Proof of the uniqueness theorem}

We start with some preliminaries.

\subsection{Vanishing in intervals}

\label{sec:zeroonlines}

Let $-\infty<a<b<\infty$ and $I:=(a,b)$.

\begin{lemma}
\label{lemma:zeroonlines}
Let $G\in\Lo(I;\rNN)$, $\zeta\in\Woo(I;\rN)$ with
$\zeta'=G\,\zeta$ and $\zeta(a)=0$. Then $\zeta=0$.
\end{lemma}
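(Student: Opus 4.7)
The plan is to integrate the differential equation against its Dirichlet condition, reduce to a Gronwall-type integral inequality, and then handle the merely integrable coefficient by exploiting the absolute continuity of the Lebesgue integral.

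Since $I\subset\R$ is one-dimensional, the embedding $\Woo(I;\rN)\hookrightarrow\Cz(\,\ol I\,;\rN)$ applies, so that $\zeta$ admits an absolutely continuous representative which is bounded on $[a,b]$. Using $\zeta(a)=0$ together with the fundamental theorem of calculus and the differential equation, I would write
\begin{align*}
\zeta(t)=\int_a^t\zeta'(s)\,ds=\int_a^tG(s)\zeta(s)\,ds\qquad\text{for all }t\in[a,b],
\end{align*}
and pass to the Euclidean/operator norm to obtain
\begin{align*}
|\zeta(t)|\leq\int_a^t|G(s)|\,|\zeta(s)|\,ds.
\end{align*}

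Because $G\in\Lo(I;\rNN)$, the map $t\mapsto\int_a^t|G(s)|\,ds$ is continuous and vanishes at $a$, so I may choose $\delta>0$ such that $\int_a^{a+\delta}|G(s)|\,ds<\halb$. For $t\in[a,a+\delta]$ the integral inequality above and continuity of $\zeta$ then yield
\begin{align*}
|\zeta(t)|\leq\sup_{[a,a+\delta]}|\zeta|\cdot\int_a^{a+\delta}|G(s)|\,ds\leq\halb\sup_{[a,a+\delta]}|\zeta|,
\end{align*}
and since this supremum is finite (compactness plus continuity), taking the supremum on the left forces $\zeta\equiv 0$ on $[a,a+\delta]$.

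To cover all of $[a,b]$ I would invoke a standard continuation argument. Set
\begin{align*}
T:=\sup\bigl\{\,t\in[a,b]\,:\,\zeta|_{[a,t]}=0\,\bigr\};
\end{align*}
the previous step gives $T\geq a+\delta>a$, and continuity of $\zeta$ ensures $\zeta(T)=0$. If $T<b$, repeating the local argument with $T$ in place of $a$ (again using the absolute continuity of $\int|G|$) produces some $\delta'>0$ with $\zeta\equiv 0$ on $[T,T+\delta']$, contradicting the maximality of $T$. Hence $T=b$ and $\zeta\equiv 0$ on $[a,b]$.

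The only real obstacle is that $G$ is merely $\Lo$ rather than $\Li$, which prevents a one-line application of classical Gronwall with $e^{\int|G|}$ as prefactor being immediately useful; however, absolute continuity of $\int|G|$ lets me contract the sup-norm of $\zeta$ by a factor $\halb$ on a short initial interval, and the continuation step removes the dependence of $\delta$ on the starting point.
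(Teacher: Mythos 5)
Your proof is correct, but it takes a different route from the paper's. The paper reduces to the same integral identity $\zeta(x)=\int_a^x G(t)\zeta(t)\,{\rm d}t$ and then simply applies the integral form of Gronwall's inequality, obtaining $|\zeta(x)|\leq|\zeta(a)|\exp\big(\int_a^x\norm{G(t)}\,{\rm d}t\big)=0$ in one line. Your stated obstacle --- that $G\in\Lo$ rather than $\Li$ blocks a direct application of Gronwall --- is not actually an obstacle: the integral form of Gronwall's lemma only requires the coefficient $\norm{G(\cdot)}$ to be nonnegative and integrable, and the exponential factor $\exp\big(\int_a^b\norm{G}\big)$ is finite precisely because $G\in\Lo(I)$. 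Your alternative, contracting the sup-norm of $\zeta$ on a short initial interval via absolute continuity of $t\mapsto\int_a^t|G|$ and then propagating by a standard connectedness/continuation argument, is a perfectly valid and elementary substitute; it is essentially a proof of the relevant special case of Gronwall's lemma from scratch. What the paper's approach buys is brevity; what yours buys is self-containedness, at the cost of the extra continuation step (where you correctly note that $\zeta(T)=0$ by continuity and that the new $\delta'$ is again supplied by absolute continuity of the integral, so no uniformity of $\delta$ is needed).
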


\begin{proof} We use Gronwall's inequality.
Because $\zeta\in\Woo(I)$, $\zeta$ is absolutely continuous
and hence it can be written as an integral over its derivative:
\begin{align*}
\zeta(x)=\zeta(a)+\int_a^x\zeta'(t)\,{\rm d}t
=\zeta(a)+\int_a^xG(t)\zeta(t)\,{\rm d}t
\quad\impl\quad
|\zeta(x)|\leq|\zeta(a)|+\int_a^x\norm{G(t)}|\zeta(t)|\,{\rm d}t.
\end{align*}
An application of Gronwall's inequality leads to 
$$|\zeta(x)|\leq|\zeta(a)|\exp\big(\int_a^x\norm{G(t)}\,{\rm d}t\big)=0,\quad x\in I,$$
which concludes the proof.
\end{proof}

\subsection{Vanishing in cubes}

Let $Q$ be a cuboid in $\rN$ and let $\ga$ be a face of $Q$, 
i.e., $Q=\ga\times I$ with $I$ from the previous section.
By \cite[Th. 2.1.4]{Ziemer89} we have that
for $u\in\Lo(Q)$ the following is equivalent:
$u\in\Woo(Q)$, if and only if $u$ has a representative
which is absolutely continuous on almost all line segments in $Q$ parallel to the coordinate axes 
and whose (classical a.e.) partial derivatives belong to $\Lo(Q)$.
These classical partial derivatives coincide with the weak derivatives almost everywhere.
In particular, if $u\in\Woo(Q)$ 
then $u_{\gamma}:=u(\gamma,\cdot)\in\Woo(I)$ f.a.a. $\gamma\in\ga$.
Of course, the same holds for $u\in\Wop(Q)$ with $p\geq 1$.

\label{sec:zerooncube}
\begin{lemma}
\label{lemma:zerooncube}
Let $G\in\Lo(Q;\rNNN)$ and $\zeta\in\Woo(Q;\rN)$ 
with $\na\zeta=G\zeta$ and $\zeta|_{\ga}=0$.
Then $\zeta=0$.
\end{lemma}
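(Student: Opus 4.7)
The plan is to reduce to the one-dimensional statement of Lemma \ref{lemma:zeroonlines} by slicing $Q = \ga \times I$ along the lines $\{\gamma\}\times I$ for $\gamma \in \ga$, and exploiting the $\Woo$--ACL characterization recalled just before the statement. Writing the system $\na\zeta = G\zeta$ componentwise as $\p_i\zeta = G_i\,\zeta$, $i=1,\dots,N$, with slices $G_1,\dots,G_N \in \Lo(Q;\rNN)$ of the third-order coefficient, let $G_N$ be the one associated to the coordinate direction of $I$.

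By the ACL characterization, for a.e.\ $\gamma \in \ga$ the slice $\zeta_\gamma := \zeta(\gamma,\cdot)$ lies in $\Woo(I;\rN)$ with classical derivative agreeing a.e.\ with $\p_N\zeta(\gamma,\cdot)$. Fubini applied to $G \in \Lo(Q;\rNNN)$ additionally gives $G_N(\gamma,\cdot)\in\Lo(I;\rNN)$ for a.e.\ $\gamma$, so the identity $\p_N\zeta = G_N\zeta$ restricts along a.e.\ vertical line to
$$\zeta_\gamma'(t) = G_N(\gamma,t)\,\zeta_\gamma(t) \quad\text{for a.e.\ } t \in I.$$

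The remaining task is to convert the boundary condition $\zeta|_\ga = 0$ into the pointwise initial condition $\zeta_\gamma(a) = 0$ for a.e.\ $\gamma$. Since the absolutely continuous representative on each good line has a well-defined limit as $t\to a$, and this limit realises the trace of $\zeta$ on the face $\ga$ at a.e.\ point (which is part of the ACL equivalence, together with the standard identification of $\Woo$-traces on flat boundary pieces), the vanishing of the trace forces $\zeta_\gamma(a) = 0$ for a.e.\ $\gamma$. For every such good $\gamma$, Lemma \ref{lemma:zeroonlines} applied to $\zeta_\gamma$ with coefficient $G_N(\gamma,\cdot)$ yields $\zeta_\gamma \equiv 0$ on $I$, and Fubini then gives $\zeta = 0$ a.e.\ on $Q$. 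The only step requiring genuine care is the identification of the face trace with pointwise limits of AC representatives along a.e.\ line; once this is granted, the rest is a routine Fubini reduction to the one-dimensional lemma.
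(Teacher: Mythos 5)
Your proposal is correct and follows essentially the same route as the paper: slice $Q=\ga\times I$ along vertical lines, use the ACL characterization of $\Woo$ to get $\zeta_\gamma\in\Woo(I;\rN)$ with $\zeta_\gamma'=G_N(\gamma,\cdot)\zeta_\gamma$ and $G_N(\gamma,\cdot)\in\Lo(I)$ by Fubini, identify the vanishing trace with $\zeta_\gamma(a)=0$ for a.e.\ $\gamma$, and apply Lemma \ref{lemma:zeroonlines} linewise. The only presentational difference is that you explicitly flag the identification of the face trace with the pointwise limits of the AC representatives, a point the paper's proof asserts without comment.
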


\begin{proof}
Since
$$\normLoom{\zeta}=\int_\ga\int_a^b|\zeta(\gamma,x)|\,{\rm d}x\,{\rm d}\gamma
=\int_\ga\int_a^b|\zeta_{\gamma}(x)|\,{\rm d}x\,{\rm d}\gamma$$
we only have to show $\zeta_{\gamma}=0$ a.e..
As $\zeta\in\Woo(Q)$, 
$\zeta_{\gamma}\in\Woo(I)$ f.a.a. $\gamma\in\ga$ 
by \cite[Th. 2.1.4]{Ziemer89}, as mentioned before. Since
$\zeta_{\gamma}'$ is the last column $\na\zeta e^{N}$ of $\na\zeta$, we have
$$\zeta_{\gamma}'=\na\zeta(\gamma,\cdot) e^{N}=G(\gamma,\cdot)\zeta(\gamma,\cdot) e^{N}
=G(\gamma,\cdot)\zeta_{\gamma}e^{N}=:G_{\gamma}\zeta_{\gamma}.$$
For fixed $(\gamma,x)\in Q$, $G(\gamma,x)$ is a linear mapping from 
$\rN$ to $\rNN$, its product with $\zeta_\gamma(x)\in\rN$ is an element of $\rNN$ 
and multiplication by $e^N$ gives an element of $\rN$ depending linearly on $\zeta_\gamma(x)$.
Hence, $G_{\gamma}(x)$ is a linear mapping from $\rN$ to $\rN$ a.e..
Even $G_{\gamma}\in\Lo(I;\rNN)$ holds, since $G\in\Lo(Q)$.
Also, $\zeta|_{\ga}=0$ implies $\zeta_{\gamma}(a)=0$ f.a.a. $\gamma\in\ga$.
By Lemma \ref{lemma:zeroonlines} we obtain $\zeta_{\gamma}=0$.
\end{proof}

\subsection{Unique continuation}
\label{sec:fromcubetodomain}

\begin{lemma}
\label{lemma:fromcubetocube}
Let $G\in\Loomrnnn$ and
$\zeta\in\Wooomrn$ with $\na\zeta=G\zeta$. 
Moreover, let $\zeta$ vanish in an open ball $B\subset\om$.
Then $\zeta=0$.
\end{lemma}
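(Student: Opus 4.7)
The plan is to propagate the vanishing of $\zeta$ from the ball $B$ to all of $\om$ by a standard connectedness argument, with Lemma \ref{lemma:zerooncube} serving as the local propagation tool. First I would set $U := \{x \in \om : \zeta = 0 \text{ a.e.\ on some open neighborhood of } x\}$, which is open by definition and non-empty since $B \subset U$. Because a Lipschitz domain is connected, once $U$ is shown to be relatively closed in $\om$ we will have $U = \om$, i.e.\ $\zeta = 0$ a.e.\ on $\om$.

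The closedness step is the heart of the argument. Let $x \in \overline{U} \cap \om$. I would pick any $y \in U$ sufficiently close to $x$, together with some $\delta > 0$ such that $B(y,\delta) \subset \om$ and $\zeta = 0$ a.e.\ on $B(y,\delta)$ (available by definition of $U$). Then I would build a thin cuboid $Q \subset \om$ containing $x$ and with one end face $\ga$ lying entirely inside $B(y,\delta)$: after an orthogonal change of coordinates making $y-x$ parallel to a coordinate axis, $Q$ can be taken as a product of short intervals around the coordinates of $x$, elongated along that axis so that its far face sits inside $B(y,\delta)$. The system $\na\zeta = G\zeta$ is covariant under rigid motions of $\rN$ (via the chain rule for $\zeta \circ R$, with $G$ transformed accordingly), so Lemma \ref{lemma:zerooncube} may be applied in the rotated frame; since $\zeta$ vanishes on a whole open neighborhood of $\ga$, its trace there is zero. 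Lemma \ref{lemma:zerooncube} then forces $\zeta = 0$ a.e.\ on $Q$, in particular on a neighborhood of $x$, so $x \in U$.

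The only obstacle I anticipate is geometric and minor: one must check that by choosing $y$ close enough to $x$ and the transverse radius of $Q$ small enough, $Q$ fits inside $\om$ while its end face lies inside $B(y,\delta)$. This reduces to a quantitative condition like $r\sqrt{N-1} < \delta$ on the transverse radius $r$, together with $Q \subset \om$, both achievable by shrinking parameters. No analytic input beyond Lemma \ref{lemma:zerooncube} and the connectedness of $\om$ is needed.
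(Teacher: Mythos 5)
Your proposal is correct and uses exactly the paper's key mechanism: propagating the zero set through thin axis-aligned cuboids whose far face lies in the region where $\zeta$ already vanishes, via Lemma \ref{lemma:zerooncube}. The only difference is in the globalization bookkeeping --- you run a standard open--relatively-closed argument on the connected set $\om$, whereas the paper first treats convex $\om$ by joining points with segments and then covers arbitrary paths by finitely many convex sets; your version is if anything slightly cleaner, and you are more explicit than the paper about the rotation needed to make the cuboid axis-aligned before invoking Lemma \ref{lemma:zerooncube}.
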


\begin{proof}
Let $\om$ be convex and pick some $x_{1}\in B$.
Then we can take a straight line between $x_1$ 
and some other point $x_2\in\om$ and a cuboid $Q$ 
containing this line and having one face being entirely located in $B$. 
Then by Lemma \ref{lemma:zerooncube} 
$\zeta=0$ in $Q$ and hence in a whole neighborhood of $x_2$. 
Since $x_{2}$ was arbitrary, we have $\zeta=0$ in $\om$.
By induction this can be carried over to connected unions of finitely many convex sets 
and hence works for path-connected sets, 
because every path between two points can be covered by such a finite union. 
Since domains are path-connected, we finally achieve $\zeta=0$ in $\om$.
\end{proof}

\begin{remark} 
Lemma \ref{lemma:fromcubetocube} can also be stated as: 
The equation $\na\zeta=G\zeta$, i.e.,
the operator $\na-G$ has the unique continuation property.
Moreover, it is enough that $\zeta$ vanishes on a small part of some
$(N-1)$-dimensional hyper-plane.
\end{remark}

\subsection{Proof of Theorem \ref{result:1}}

\label{sec:proofs}

Let $\zeta$ be as in Theorem \ref{result:1}.
If we can show that $\zeta$ vanishes on an open set, 
we can apply Lemma \ref{lemma:fromcubetocube} and hence, $\zeta$ must vanish in the whole of $\om$.
To make $\zeta=0$ on an open set, 
we transform a part of $\ga$, where we know $\zeta$ to be zero, 
and a neighborhood $U$ onto a cuboid $Q$, 
where we can use Lemma \ref{lemma:zerooncube} 
and the transformed function is forced to vanish, 
hence also $\zeta$ must vanish on $U$. 
Let us pick a point on $\ga$ and a corresponding open neighborhood
$U$ as well as a bijective bi-Lipschitz transformation 
$\varphi:\hat{Q}:=(-1,1)^N\to U$,
mapping the cuboid $Q:=(-1,1)^{N-1}\times(0,1)$ 
onto $U\cap\om$
and $(-1,1)^{N-1}\times\{0\}$ onto $U\cap\ga$.
Now $\tilde{G}:=G\circ\varphi\in\Lo(Q;\rNNN)$
and, see again e.g. \cite[Th. 2.2.2]{Ziemer89},
$\tilde{\zeta}:=\zeta\circ\varphi\in\Woo(Q;\rN)$.
By the chain rule we have
\begin{align*}
\na\tilde\zeta
=((\na\zeta)\circ\varphi)\na\varphi
=((G\,\zeta)\circ\varphi)\,\na\varphi
=\tilde{G}\tilde\zeta\,\na\varphi
=:\hat{G}\tilde\zeta.
\end{align*}
Since $\na\varphi$ is uniformly bounded we get
$$\forall\,z\in Q,y\in\rN\quad\norm{\hat{G}(z)y}\leq
\norm{\tilde{G}(z)}\,|y|\,\norm{\na\varphi(z)}
\leq c\norm{\tilde{G}(z)}\,|y|
\quad\impl\quad
\norm{\hat{G}(z)}\leq c\norm{\tilde{G}(z)}$$
and hence
$$\int_Q\norm{\hat{G}(z)}{\rm d}z
\leq c\int_Q\norm{\tilde{G}(z)}{\rm d}z
=c\int_{\varphi(Q)}\norm{G(x)}\,|\det\na\varphi\inv(x)|{\rm d}x
\leq c\int_{\om\cap U}\norm{G(x)}{\rm d}x<\infty$$
since $\det\na\varphi\inv$ is uniformly bounded as well.
Thus $\hat{G}\in\Lo(Q;\rNNN)$. 
Because $\zeta$ vanishes on $U\cap\ga$, 
$\tilde\zeta$ vanishes on $F:=\varphi\inv(U\cap\ga)=(-1,1)^{N-1}\times\{0\}$.
Hence $\tilde\zeta\in\Woo(Q;\rN)$ solves
$$\na\tilde\zeta=\hat G\,\tilde\zeta,\quad
\tilde\zeta|_{F}=0.$$
Lemma \ref{lemma:zerooncube} implies $\tilde\zeta=0$ in $Q$.
Thus $\zeta=\tilde\zeta\circ\varphi\inv=0$ in $U\cap\om$, which contains an open ball.
\hfill$\square$

\section{Proof of the norm property}

\subsection{Curl of matrix-products}
\label{sec:diecurlformel}

We identify $\rtt$ and $\rni$ by the following isomorphisms:
$$\mat:\rni\to\rtt,\quad 
\begin{bmatrix}a_1\\\vdots\\a_9\end{bmatrix}\mapsto 
\begin{bmatrix}a_1&a_2&a_3\\a_4&a_5&a_6\\a_7&a_8&a_9\end{bmatrix},\quad
\vect:=\mat\inv:\rtt\to\rni$$
We also use the following canonical isomorphism to identify $\rt$ and $\so(3)$:
\begin{align*}
\axl&:\so(3)\to\R^{3},&
\begin{bmatrix}
0&-a_3&a_2\\a_3&0&-a_1\\-a_2&a_1&0
\end{bmatrix}
&\mapsto
\begin{bmatrix}
a_1\\a_{2}\\a_3
\end{bmatrix}
\intertext{Moreover, we define}
\dvec&:\rtt\to\rt,&
\begin{bmatrix}
\textcircled{$a_{1}$}&a_2&a_3\\a_4&\textcircled{$a_{5}$}&a_6\\a_7&a_8&\textcircled{$a_{9}$}
\end{bmatrix}
&\mapsto 
\begin{bmatrix}
a_1\\a_{5}\\a_9
\end{bmatrix},\\
\skewvec&:\rtt\to\rt,&
\begin{bmatrix}
a_1&\textcircled{$a_{2}$}&\textcircled{$a_{3}$}\\a_4&a_5&\textcircled{$a_{6}$}\\a_7&a_8&a_9
\end{bmatrix}
&\mapsto 
\begin{bmatrix}
-a_6\\a_{3}\\-a_2
\end{bmatrix},\\
\symvec&:\rtt\to\rt,&
\begin{bmatrix}
a_1&a_2&a_3\\\textcircled{$a_{4}$}&a_5&a_6\\\textcircled{$a_{7}$}&\textcircled{$a_{8}$}&a_9
\end{bmatrix}
&\mapsto 
\begin{bmatrix}
a_8\\-a_{7}\\a_4
\end{bmatrix}.
\end{align*}
We note $\skewvec=\symvec=\axl$, $\dvec=0$ on $\so(3)$.
Furthermore, $Ax=\axl(A)\times x$ and
$\smat(a)x=a\times x$ holds for all $A\in\so(3)$ and all $a,x\in\rt$, 
where $\times$ denotes the cross-product.

For a matrix $Y\in\rtt$ with $Y^{\top}=[y_{1}\,y_{2}\,y_{3}]$ 
and vectors $y_{n}\in\rt$ we define
\begin{align*}
L_{\diag,Y}
&=
-\begin{bmatrix}
\smat y_{1}&0&0\\
0&\smat y_{2}&0\\
0&0&\smat y_{3}
\end{bmatrix},\\
L_{\skew,Y}
&=
\begin{bmatrix}
0&-\smat y_{3}&\smat y_{2}\\
\smat y_{3}&0&0\\
0&0&0
\end{bmatrix},\\
L_{\sym,Y}
&=
\begin{bmatrix}
0&0&0\\
0&0&-\smat y_{1}\\
-\smat y_{2}&\smat y_{1}&0
\end{bmatrix},\\
L_Y:=L_{\skew,Y}+L_{\sym,Y}
&=
\begin{bmatrix}
0&-\smat y_{3}&\smat y_{2}\\
\smat y_{3}&0&-\smat y_{1}\\
-\smat y_{2}&\smat y_{1}&0
\end{bmatrix}
\end{align*}
and note $L_{Y}^{\top}=L_{Y}$.
Furthermore, for vector fields $v$ in $\rt$ we set
$$\nah v:=\vect\na v,$$
denoting the vector-field containing the nine partial derivatives 
of the three components of $v$.

Now, we extend Neff's formula from \cite[Lemma 3.7]{Neff00b} 
in two ways, such that 
it can be applied with weaker differentiability 
and for general matrices.
For this, we define
$$\WsComrtt:=\{Y\in\Lsomrtt:\Curl Y\in\Lsomrtt\}.$$

\begin{remark}
\label{skewhorem}
For skew-symmetric matrix fields we have $\WsComrtt=\Wosomrtt$,
since in this case the $\Curl$ controls all the derivatives, see \cite{Neff_curl06}.
\end{remark}

\begin{lemma}
\label{neffformula}
Let $r,s\in(1,\infty)$ with $1/r+1/s=1$.
Moreover, let
$X\in\Woromrtt$ and
$Y\in\WsComrtt$.
Then 
$XY\in\WoComrtt$
and
\begin{align}
\label{eq:curlformel_allg}
\Curl(XY)&=\mat(L_{\diag,Y}\nah\dvec+L_{\skew,Y}\nah\skewvec+L_{\sym,Y}\nah\symvec)X+X\Curl Y
\intertext{with $L_{\diag,Y},L_{\skew,Y},L_{\sym,Y}\in\Ls(\om;\rnini)$.
For skew-symmetric $X$ formula \eqref{eq:curlformel_allg} turns to}
\label{eq:curlformelschwach}
\Curl(XY)&=\mat L_Y(\nah\axl X)+X\Curl Y,
\end{align}
where $\det L_{Y}=-2(\det Y)^3$.
Hence, if $Y$is invertible, so is $L_{Y}$.
\end{lemma}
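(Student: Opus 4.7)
The plan is to establish the formula first for smooth $X, Y$ by a direct componentwise computation, then extend it by a density-and-H\"older argument, and finally specialize to the skew-symmetric case. The algebraic identity $\det L_Y = -2(\det Y)^3$ is the principal remaining obstacle.

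For smooth $X, Y \in \Ci(\omq;\rtt)$, I would compute $[\Curl(XY)]_{im}$ entry-wise via the Levi--Civita symbol and apply the product rule. The summand in which $\partial_j$ hits $Y$ reassembles directly into $[X\Curl Y]_{im}$. The remaining summand $\sum_{j,k,\ell}\epsilon_{mjk}(\partial_j X_{i\ell})Y_{\ell k}$ is bilinear in $\na X$ and $Y$; decomposing the nine coordinates of $\na X$ according to the splitting of matrices into diagonal, strictly-upper-triangular and strictly-lower-triangular parts parametrized by $\dvec X$, $\skewvec X$, $\symvec X$, and matching coefficients identifies the three coefficient matrices as exactly $L_{\diag,Y}, L_{\skew,Y}, L_{\sym,Y}$. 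This is an explicit but purely mechanical algebraic verification that produces \eqref{eq:curlformel_allg} in the smooth case.

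Then I would extend to $X \in \Woromrtt$ and $Y \in \WsComrtt$ by approximating by smooth $X_n, Y_n$ in the respective norms. Since $1/r+1/s=1$, H\"older's inequality gives $X_nY_n \to XY$, $(\na X_n)Y_n \to (\na X)Y$, and $X_n\Curl Y_n \to X\Curl Y$ all in $\Lo(\om)$; moreover the coefficient matrices $L_{\bullet,Y_n}$, being linear in $Y_n$, converge to $L_{\bullet,Y}$ in $\Ls$. Passing to the limit in the smooth-case identity yields \eqref{eq:curlformel_allg} almost everywhere, and in particular $XY \in \WoComrtt$. For the skew-symmetric specialization, $\dvec X=0$ and $\skewvec X=\symvec X=\axl X$ on $\so(3)$, so the diagonal term vanishes and the remaining two combine into $\mat L_Y(\nah\axl X)$, giving \eqref{eq:curlformelschwach}.

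The hard part is the determinant identity $\det L_Y = -2(\det Y)^3$. I would argue by a polynomial factorization. A simple count using column balance in the Leibniz expansion (together with the observation that each entry of $L_Y$ is linear in exactly one of the rows $y_1, y_2, y_3$ of $Y$) shows that $\det L_Y$ is trihomogeneous of degree three in each row, hence a polynomial of total degree nine just like $(\det Y)^3$. To factor out $\det Y$, one checks that at a generic rank-two matrix $Y$ with $y_3=\alpha y_1+\beta y_2$, the kernel of $L_Y$ is three-dimensional: analysing the system $L_Y u = 0$ reduces it to the single vector condition $y_2\times u_1 = y_1\times u_2$ together with $u_3=\alpha u_1+\beta u_2$, and the former cuts out a three-dimensional subspace of $\R^6$. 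Hence $\det L_Y$ vanishes to order at least three on the irreducible hypersurface $\{\det Y=0\}$, so $(\det Y)^3$ divides $\det L_Y$; by degree $\det L_Y = c(\det Y)^3$, and evaluating at $Y = I$ (where most rows of $L_Y$ have only a single nonzero entry and the remaining terms in the Leibniz expansion are easily computed) pins down $c=-2$. Invertibility of $L_Y$ for invertible $Y$ is then immediate.
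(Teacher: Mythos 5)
Your proof of the main identity follows the same route as the paper: verify \eqref{eq:curlformel_allg} for smooth fields by an explicit componentwise computation (the paper does exactly this in its appendix, grouping the term $\sum_n \na x_{ln}\times y_n$ according to the $\dvec$/$\skewvec$/$\symvec$ splitting of $\na X$), then pass to the limit using density of $\Ciomq$ in $\Worom$ and $\WsComrtt$ together with H\"older's inequality for the conjugate exponents $r,s$, and finally obtain \eqref{eq:curlformelschwach} from $\dvec X=0$, $\skewvec X=\symvec X=\axl X$ on $\so(3)$ and $L_Y=L_{\skew,Y}+L_{\sym,Y}$. The one genuine divergence is the determinant identity $\det L_Y=-2(\det Y)^3$: the paper does not prove it but cites \cite[Lemma 3.7]{Neff00b}, whereas you supply a self-contained argument. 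Your argument is sound: the block structure forces every Leibniz term to be trihomogeneous of degree three in each row $y_i$; at a generic rank-two $Y$ with $y_3=\alpha y_1+\beta y_2$ the vectors $u$ with $u_3=\alpha u_1+\beta u_2$ and $y_2\times u_1=y_1\times u_2$ form a three-dimensional subspace of $\ker L_Y$, so $\det L_Y$ vanishes to order at least three along the irreducible hypersurface $\{\det Y=0\}$, whence $(\det Y)^3$ divides $\det L_Y$ and equality of degrees gives $\det L_Y=c(\det Y)^3$; evaluating at $Y=\id$ (six rows of $L_{\id}$ have a single nonzero entry, leaving exactly two surviving permutations, each contributing $-1$) gives $c=-2$. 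What this buys is independence from the reference; what it costs is length, and note that for the divisibility step you only need $\dim\ker L_Y\geq3$ at generic rank-two $Y$, which your explicit subspace already provides, so the claim that the kernel is exactly three-dimensional need not be verified.
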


We note that for smooth ($\Co$) matrices $X,Y$, 
where $X$ is skew-symmetric, formula \eqref{eq:curlformelschwach}
was already shown in \cite[Lemma 3.7]{Neff00b}.

\begin{proof}
Since $\Ciomq$ is dense in both $\Worom$ and $\WsCom$
we have to show \eqref{eq:curlformel_allg} only for smooth matrix fields.
But this is a straight forward calculation, which we present in the appendix.
\eqref{eq:curlformelschwach} is a simple consequence from \eqref{eq:curlformel_allg}
and the assertion about the determinants has been proved already in \cite[Lemma 3.7]{Neff00b}.
\end{proof}

\subsection{Proof of Theorem \ref{result:norm}}
\label{sec:thenorm}

Let $u\in\Cicomgart$ with $\normp{u}=0$. We have to show $u=0$.
Note that $\sym(\na u\Fpneu\inv)=0$ implies
\begin{align}
\label{eq:defnA}
\na u\Fpneu\inv=A,
\end{align}
where $A$ is some skew-symmetric matrix field. 
Moreover, since $A\Fpneu=\na u$ we have
\begin{align}
\label{eq:curlnull}
\Curl(A\Fpneu)=0.
\end{align}

Without loss of generality we assume $\ga$ to be bounded
(otherwise, replace $\ga$ by a bounded open subset of itself)
and that the compact set $\supp u$ and $\ga$ are both contained in some open ball $B$.
Define $\tilde{\om}:=\om\cap B$. 
Then $\na u,\Fpneu,\Fpneu\inv$ and $A$ belong to 
$\Li(\tilde{\om},\rtt)\subset\Lr(\tilde{\om},\rtt)$
for all $r\in[1,\infty]$. 

Since $\Curl\Fpneu\inv\in\Lq(\tilde{\om},\rtt)$,
Lemma \ref{neffformula} with $X:=\na u$ and $Y:=\Fpneu\inv$
together with Remark \ref{skewhorem} show
$A\in\Woo(\tilde{\om},\rtt)$
and by \eqref{eq:curlformel_allg} even
$A\in\Woq(\tilde{\om},\rtt)$ holds.
Another application of Lemma \ref{neffformula} with $X:=A$ and $Y:=\Fpneu$ 
gives by \eqref{eq:curlformelschwach} and \eqref{eq:curlnull}
$$\mat L_{\Fpneu}(\nah\axl A)+A\Curl\Fpneu=0,$$
since $A$ is skew-symmetric. Thus, 
$\zeta:=\axl A\in\Woq(\tilde{\om},\rt)\subset\Woo(\tilde{\om},\rt)$ solves
\begin{align}
\label{zetaCurlFp}
\na\zeta=-\mat L_{\Fpneu}\inv\vect\big(\smat\zeta\Curl\Fpneu\big)=:G_{\Fpneu}\zeta.
\end{align}
Since $L_{\Fpneu},L_{\Fpneu}\inv\in\Li(\tilde{\om},\rnini)$ 
and $\Curl\Fpneu\in\Lp(\tilde{\om},\rtt)\subset\Lo(\tilde{\om},\rtt)$,
also $G_{\Fpneu}$ belongs to $\Lp(\tilde{\om},\rttt)\subset\Lo(\tilde{\om},\rttt)$.
Additionally, $A$ and hence $\zeta$ vanish on $\ga$ by \eqref{eq:defnA} since $u$ does.
By Theorem \ref{result:1}, $\zeta$ and therefore $A$ and $\na u$
vanish in $\tilde{\om}$. Thus $u=0$ in $\tilde{\om}$ because $u$ vanishes on $\ga$.
Since $\supp u\subset\tilde{\om}$ we finally obtain $u=0$ in $\om$.
\hfill$\square$

\section{Proof of Theorem \ref{result:infrigdisplemma}}
\label{sec:infrigdispl}

$A:=\na\Phi(\na\Psi)\inv\in\Lpomrtt$ is skew-symmetric by \eqref{symconj}.
Since the standard mollification preserves skew-symmetry we can pick a sequence
$(A_{n})\subset\Cicomrtt$ of skew-symmetric smooth matrices approximating
$A$ in $\Lpom$. Applying Lemma \ref{neffformula}, i.e., 
\eqref{eq:curlformelschwach}, to $A_{n}\na\Psi$ we get
$$\Curl(A_{n}\na\Psi)=\mat L_{\na\Psi}(\nah\axl A_{n})$$
with invertible $L_{\na\Psi}\in\Woqom$ satisfying
$L_{\na\Psi}\inv\in\Woqom$
by assumption on the regularity of $\Psi$.
Pick $\Theta\in\Cicomrtt$.
Then $L_{\na\Psi}^{-\top}\vect\Theta\in\Woqcom$ and 
since $A\na\Psi=\na\Phi\in\Lpomrtt$ with $\Curl(A\na\Psi)=0$ we have
$$\scpLtom{A_{n}\na\Psi}{\Curl(\mat L_{\na\Psi}^{-\top}\vect\Theta)}
\to\scpLtom{A\na\Psi}{\Curl(\mat L_{\na\Psi}^{-\top}\vect\Theta)}=0.$$
On the other hand we have for the left hand side
\begin{align*}
\scpLtom{A_{n}\na\Psi}{\Curl(\mat L_{\na\Psi}^{-\top}\vect\Theta)}
&=\scpLtom{\Curl(A_{n}\na\Psi)}{\mat L_{\na\Psi}^{-\top}\vect\Theta}\\
&=\scpLtom{L_{\na\Psi}(\nah\axl A_{n})}{L_{\na\Psi}^{-\top}\vect\Theta}\\
&=\scpLtom{\nah\axl A_{n}}{L_{\na\Psi}^{\top}L_{\na\Psi}^{-\top}\vect\Theta}\\
&=\scpLtom{\na\axl A_{n}}{\Theta}\\
&=\scpLtom{\axl A_{n}}{\Div\Theta}
\to\scpLtom{\axl A}{\Div\Theta}.
\end{align*}
Hence, $\na\axl A=0$ and therefore $A\in\so(3)$ is constant.
Thus, $\na(\Phi-A\Psi)=\na\Phi-A\na\Psi=0$ and
$\Phi=A\Psi+a$ with some $a\in\rt$.
\hfill$\square$

\appendix

\section{Appendix}

We show \eqref{eq:curlformel_allg} for smooth matrix fields
$X=[x_{nm}]_{n.m=1,2,3}$ and $Y=[y_{nm}]_{n.m=1,2,3}$.
The $l$-th row of $XY$ is the transpose of the vector
having the entries $x_{ln}y_{nk}$ for $k=1,2,3$.
Thus, the $l$-th row of $\Curl(XY)$ is the transpose of the vector
having the entries $\p_{i}(x_{ln}y_{nj})-\p_{j}(x_{ln}y_{ni})$ 
for $k=1,2,3$, where the $\curl$ of a vector field $v$ is written as
$$\curl v=
\begin{bmatrix}
\p_{2}v_{3}-\p_{3}v_{2}\\
\p_{3}v_{1}-\p_{1}v_{3}\\
\p_{1}v_{2}-\p_{2}v_{1}
\end{bmatrix}=[\p_{i}v_{j}-\p_{j}v_{i}]_{k=1,2,3}.$$
Therefore,
\begin{align*}
[\Curl(XY)]_{lk}
&=\p_{i}x_{ln}y_{nj}-\p_{j}x_{ln}y_{ni}+x_{ln}(\ub{\p_{i}y_{nj}-\p_{j}y_{ni}}_{=[\Curl Y]_{nk}})\\
&=\p_{i}x_{ln}y_{nj}-\p_{j}x_{ln}y_{ni}+[X\Curl Y]_{lk}.
\end{align*}
With the transpose of the $n$-th row of $Y$ denoted by $[y_{n}]_{j}:=y_{nj}$, we get
\begin{align*}
[\Curl(XY)]_{lk}-[X\Curl Y]_{lk}
&=[\na x_{ln}\times y_{n}]_{k}
\end{align*}
and hence for the $l$-th row 
$[\Curl(XY)-X\Curl Y]_{l}=[(\na x_{ln}\times y_{n})^{\top}]_{l}$.
Finally we obtain:
\begin{align*}
&\qquad\Curl(XY)-X\Curl Y
=\begin{bmatrix}
(\na x_{1n}\times y_{n})^{\top}\\
(\na x_{2n}\times y_{n})^{\top}\\
(\na x_{3n}\times y_{n})^{\top}
\end{bmatrix}\\
&=\begin{bmatrix}
(\na x_{11}\times y_{1})^{\top}\\
(\na x_{22}\times y_{2})^{\top}\\
(\na x_{33}\times y_{3})^{\top}
\end{bmatrix}
+\begin{bmatrix}
(\na x_{12}\times y_{2})^{\top}+(\na x_{13}\times y_{3})^{\top}\\
(\na x_{23}\times y_{3})^{\top}\\
0
\end{bmatrix}
+\begin{bmatrix}
0\\
(\na x_{21}\times y_{1})^{\top}\\
(\na x_{31}\times y_{1})^{\top}+(\na x_{32}\times y_{2})^{\top}
\end{bmatrix}\\
&=-\begin{bmatrix}
(\axl\inv y_{1}\na x_{11})^{\top}\\
(\axl\inv y_{2}\na x_{22})^{\top}\\
(\axl\inv y_{3}\na x_{33})^{\top}
\end{bmatrix}
-\begin{bmatrix}
(\axl\inv y_{2}\na x_{12})^{\top}+(\axl\inv y_{3}\na x_{13})^{\top}\\
(\axl\inv y_{3}\na x_{23})^{\top}\\
0
\end{bmatrix}\\
&\qquad
-\begin{bmatrix}
0\\
(\axl\inv y_{1}\na x_{21})^{\top}\\
(\axl\inv y_{1}\na x_{31})^{\top}+(\axl\inv y_{2}\na x_{32})^{\top}
\end{bmatrix}\\
&=-\mat\begin{bmatrix}
\axl\inv y_{1}&0&0\\
0&\axl\inv y_{2}&0\\
0&0&\axl\inv y_{3}
\end{bmatrix}
\begin{bmatrix}
\na x_{11}\\
\na x_{22}\\
\na x_{33}
\end{bmatrix}\\
&\qquad-\mat\begin{bmatrix}
0&\axl\inv y_{3}&-\axl\inv y_{2}\\
-\axl\inv y_{3}&0&0\\
0&0&0
\end{bmatrix}
\begin{bmatrix}
-\na x_{23}\\
\na x_{13}\\
-\na x_{12}
\end{bmatrix}\\
&\qquad-\mat\begin{bmatrix}
0&0&0\\
0&0&\axl\inv y_{1}\\
\axl\inv y_{2}&-\axl\inv y_{1}&0
\end{bmatrix}
\begin{bmatrix}
\na x_{32}\\
-\na x_{31}\\
\na x_{21}
\end{bmatrix}\\
&=\mat(L_{\diag,Y}\nah\dvec X+L_{\skew,Y}\nah\skewvec X+L_{\sym,Y}\nah\symvec X)
\end{align*}

\bibliographystyle{amsplain} 
\bibliography{unicontfo}

\end{document}